\providecommand{\U}[1]{\protect\rule{.1in}{.1in}}
\providecommand{\U}[1]{\protect\rule{.1in}{.1in}}
\providecommand{\U}[1]{\protect\rule{.1in}{.1in}}
\newtheorem{theorem}{Theorem}[section]
\newtheorem{lemma}[theorem]{Lemma}
\begin{document}
\title{Sobolev spaces of vector-valued functions on  compact groups}
\author{Yaogan Mensah}

\address{Department of Mathematics, University of Lom\'e, Togo}
\email{\textcolor[rgb]{0.00,0.00,0.84}{mensahyaogan2@gmail.com, ymensah@univ-lome.org}}

\begin{abstract}
This paper deals with a class of Sobolev spaces of vector-valued functions on a compact group. Using some results among which are  the inversion formula and the Plancherel type theorem involving the Fourier transform of vector-valued functions, we define Sobolev spaces of Bessel potential type.  Then, some Sobolev embedding  theorems are proved. 
\end{abstract}
\maketitle

Keywords and phrases: vector-valued function, Fourier transform, Sobolev space, Sobolev embedding theorem.
\newline
2020 Mathematics Subject Classification: 46E35,43A77, 46E40 

\section{Introduction}
Sobolev spaces have proven their effectiveness in mathematical sciences. They are well studied on  certain classic spaces such as euclidean spaces and on more general differential manifolds. Some of  them can be constructed via the Fourier transform and therefore they can be  studied with techniques from abstract/classical harmonic analysis.

Some  studies of Sobolev spaces in the context of abstract harmonic analysis can be found in  \cite{Gorka1, Gorka2, Krukowski, Mensah2, Kumar}. Particularly in \cite{Kumar}, the authors introduced Sobolev spaces of complex-valued functions over  compact groups and study their properties. They obtained, among other results, some Sobolev embedding and compactness theorems. 
 
 The aim of this paper is to study the vector valued  aspect  of some results in \cite{Kumar}. More precisely, we introduce  Sobolev spaces of vector-valued functions on a compact group and proved some Sobolev embedding  theorems.
 
The rest of the paper is organized as follows. Section \ref{Preliminaries} is devoted to preliminaries on harmonic analysis of vector-valued functions on compact groups.  Section \ref{main results} contains our main results.

\section{Preliminaries}\label{Preliminaries}
In this section, we very briefly recall some facts concerning harmonic analysis on compact groups, mainly their representation theory \cite{Hewitt} and the Fourier transform of vector-valued functions defined on such groups \cite{Assiamoua}.

Let $G$ be a compact Hausdorff group which may not be necessary abelian. A concrete  exemple of  such a  group is the special unitary  group  $SU(2)$  consisting of matrices $A=\left(\begin{array}{cc}
a & b \\ 
-\overline{b} & \overline{a}
\end{array} \right)$
where $a,b\in \mathbb{C}$ are such that $|a|^2+|b|^2=1$.  A unitary representation $\sigma$ of $G$  on a Hilbert space $H_\sigma$ is a  homomorphism  $\sigma : G\longrightarrow \mathcal{U}(H_\sigma)$ where  $\mathcal{U}(H_\sigma)$ denotes the group of unitary operators on $H_\sigma$. The Hilbert space $H_\sigma$ is called the representation space of $\sigma$ and the dimension of $H_\sigma$  is called the dimension of $\sigma$ and it is denoted $d_\sigma$. A  unitary representation $\sigma$ is said to be continuous if the mapping $G\longrightarrow H_\sigma, x\longmapsto \sigma (x)\xi$ is continuous for every $\xi\in H_\sigma$. 
 A  representation $\sigma$ of $G$ on  $H_\sigma$ is called irreducible if there is no proper closed subspace $M$ of $H_\sigma$ which is  invariant by $\sigma$, that is, $\forall x\in G, \forall \xi \in M, \sigma (x)\xi\in M$. 
It is well known that  the dimension of any  unitary  irreducible  representation of a compact group  is  of finite dimension \cite{Hewitt}. 
Two unitary representions $\sigma_i,\, i=1,2$ of $G$ on $H_{\sigma_i},\,i=1,2$ are said to be unitary equivalent if there exists a unitary linear operator $T: H_{\sigma_1}\longrightarrow H_{\sigma_2}$
such that $\forall x\in G, \,\sigma_2(x)T=T\sigma_1(x)$.

Let us denote by $\widehat{G}$  the the set of equivalent classes  of unitary irreducible  representations of $G$. It is called the unitary  dual of $G$ and it is discrete since $G$ is compact. 

For $\sigma \in \widehat{G}$, choose an orthonormal basis $\lbrace \xi_1^\sigma,\cdots,\xi_{d_\sigma}^\sigma\rbrace$ of $H_\sigma$. The coefficients of the representation $\sigma$ are the functions $u_{i,j}^\sigma$ defined by 
$$u_{i,j}^\sigma (x)=\langle \sigma (x)\xi_i^\sigma,\xi_j^\sigma \rangle_\sigma,\, x\in G,$$
where $\langle \cdot, \cdot\rangle_\sigma$ is the inner product of the Hilbert space $H_\sigma$.

Let $E$ be a complex  Banach space. Denote by $L^1(G, E)$ the set $E$-valued Bochner integrable functions on $G$. 

Let $f\in L^1(G, E)$. Following \cite{Assiamoua}, the Fourier transform   $\widehat{f}$ of $f$ is the collection $\left(\widehat{f}(\sigma)\right)_{\sigma \in \widehat{G}}$  of sesquilinear maps where for each $\sigma$, the sesquilinear map $\widehat{f}(\sigma)$ is defined from $H_\sigma\times H_\sigma$ into $E$,    by 
\begin{equation}\label{Fourier}
\widehat{f}(\sigma)(\xi,\eta)=\int_G\langle \sigma (x)^*\xi,\eta\rangle_\sigma f(x)dx,\,\xi,\eta \in  H_\sigma.
\end{equation}
Since $G$ is compact, the  space of $E$-valued Bochner square integrable functions on $G$, denoted by   $L^2(G, E)$, is a subspace of $L^1(G, E)$. Clearly, the Fourier transform of functions in $L^2(G,E)$ is well-defined by formula (\ref{Fourier}). 
For $f\in L^2(G,E)$, the inversion formula is given by

\begin{equation}
f(x)=\sum\limits_{\sigma\in \widehat{G}}d_\sigma \sum\limits_{i=1}^{d_\sigma}\sum\limits_{j=1}^{d_\sigma}\widehat{f}(\sigma)(\xi_j^\sigma, \xi_i^\sigma)u_{i,j}^\sigma (x)
\end{equation}

Denote by $\mathscr{S}(H_\sigma\times H_\sigma, E)$ the set of $E$-valued sesquilinear maps on $H_\sigma\times H_\sigma$. 
Set $$\mathscr{S}(\widehat{G},E)=\prod\limits_{\sigma\in \widehat{G}}\mathscr{S}(H_\sigma\times H_\sigma, E).$$

Define $\mathscr{S}_p(\widehat{G}, E), p\geqslant 1$, to be the set of elements $\phi$ of  $\mathscr{S}(\widehat{G}, E)$ such that 
 $$\sum\limits_{\sigma\in \widehat{G}}d_\sigma\sum\limits_{i=1}^{d_\sigma}\sum\limits_{j=1}^{d_\sigma}\|\phi(\xi_j^\sigma,\xi_i^\sigma)\|_E^p<\infty.$$
 Also,  consider on $\mathscr{S}_p(\widehat{G}, E)$ the norm 
 \begin{equation}
\|\phi\|_{\mathscr{S}_p}=\left(\sum\limits_{\sigma\in \widehat{G}}d_\sigma\sum\limits_{i=1}^{d_\sigma}\sum\limits_{j=1}^{d_\sigma}\|\phi(\xi_j^\sigma,\xi_i^\sigma)\|_E^p\right)^{\frac{1}{p}}.
\end{equation}
The proof of completeness  and other properties of the spaces $\mathscr{S}_p(\widehat{G}, E)$ can be found in \cite{Mensah1}. 

The space  $\mathscr{S}_2(\widehat{G},E)$ is of particular interest: the Fourier transformation is an isometry from $L^2(G,E)$ onto $\mathscr{S}_2(\widehat{G},E)$ \cite{Assiamoua}. 

Finally, let us recall the following well known fact which we will use later. For $x=(x_1,\cdots,x_n)$ where $x_1,\cdots,x_n$ are real (or complex) numbers  and for $p\geqslant 1$, set 
$$\|x\|_p=\left(|x_1|^p+\cdots +|x_n|^p\right)^{\frac{1}{p}}.$$
It is known that if $1\leqslant p\leqslant q\leqslant\infty$, then
\begin{equation}\label{Comparison}
\|x\|_q \leqslant \|x\|_p \leqslant n^{\frac{1}{p}-\frac{1}{q}}\|x\|_q.
\end{equation}

\section{Main results}\label{main results}
In this section, we introduce the Sobolev spaces of $E$-valued functions on the compact Hausdorff group $G$ and prove some embedding results. Throughout the rest of the paper, the symbol $X\hookrightarrow Y$ means that the space $X$ is continuously embedded in the space $Y$.

 Let $\left(\gamma (\sigma)\right)_{\sigma\in\widehat{G}}$ be a sequence of nonnegative real numbers. Pick  $s$ in $[0,\infty)$.  The Sobolev space $H_\gamma^s(G,E)$ is defined to be the subspace of  $L^2(G,E)$ consisting in functions $f$ such  that 
 $$\sum\limits_{\sigma\in \widehat{G}}d_\sigma(1+\gamma (\sigma)^2)^s\sum\limits_{i=1}^{d_\sigma}\sum\limits_{j=1}^{d_\sigma}\|\widehat{f}(\sigma)(\xi_j^\sigma,\xi_i^\sigma)\|_E^2<\infty.$$
 The following norm is defined on $H_\gamma^s(G,E)$ :
\begin{equation}
\|f\|_{H_\gamma^s}=\left(\sum\limits_{\sigma\in \widehat{G}}d_\sigma(1+\gamma (\sigma)^2)^s\sum\limits_{i=1}^{d_\sigma}\sum\limits_{j=1}^{d_\sigma}\|\widehat{f}(\sigma)(\xi_j^\sigma,\xi_i^\sigma)\|_E^2\right)^{\frac{1}{2}}.
\end{equation}
 
\begin{theorem}
The space $H_\gamma^s(G,E)$ is a Banach space.
\end{theorem}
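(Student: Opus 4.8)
The plan is to realize $H_\gamma^s(G,E)$ as an isometrically embedded subspace of a weighted sequence-type space built from $\mathscr{S}_2(\widehat{G},E)$, and then invoke completeness of that model space together with a closedness argument. First I would introduce the weighted space $\mathscr{S}_{2,s}(\widehat{G},E)$ consisting of those $\phi\in\mathscr{S}(\widehat{G},E)$ for which
\[
\|\phi\|_{2,s}^2=\sum_{\sigma\in\widehat{G}}d_\sigma(1+\gamma(\sigma)^2)^s\sum_{i=1}^{d_\sigma}\sum_{j=1}^{d_\sigma}\|\phi(\xi_j^\sigma,\xi_i^\sigma)\|_E^2<\infty,
\]
and observe that $\|\cdot\|_{2,s}$ is a norm (the triangle inequality follows from Minkowski's inequality applied to the $\ell^2$-sum of the nonnegative quantities $(1+\gamma(\sigma)^2)^{s/2}\|\phi(\xi_j^\sigma,\xi_i^\sigma)\|_E$). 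The point is that $\mathscr{S}_{2,s}(\widehat{G},E)$ is complete: this is exactly the argument used in \cite{Mensah1} for $\mathscr{S}_2(\widehat{G},E)$, since multiplying the $\sigma$-th block of coefficients by the fixed positive scalar $(1+\gamma(\sigma)^2)^{s/2}$ is a linear bijective isometry onto $\mathscr{S}_2(\widehat{G},E)$; alternatively one repeats the direct Cauchy-sequence argument, using completeness of $E$ componentwise and then a standard diagonal/Fatou estimate to control the tail of the weighted sum.

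Next I would transport this to $H_\gamma^s(G,E)$ via the Fourier transform. By definition, $f\in H_\gamma^s(G,E)$ precisely when $\widehat{f}\in\mathscr{S}_{2,s}(\widehat{G},E)$, and then $\|f\|_{H_\gamma^s}=\|\widehat{f}\|_{2,s}$; so $f\mapsto\widehat{f}$ is a linear isometry of $H_\gamma^s(G,E)$ into $\mathscr{S}_{2,s}(\widehat{G},E)$. To conclude completeness of $H_\gamma^s(G,E)$ it then suffices to show its image under this isometry is a closed subspace of the complete space $\mathscr{S}_{2,s}(\widehat{G},E)$. So let $(f_n)$ be Cauchy in $H_\gamma^s(G,E)$; then $(\widehat{f_n})$ is Cauchy in $\mathscr{S}_{2,s}(\widehat{G},E)$, hence converges to some $\phi$ there. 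Since $(1+\gamma(\sigma)^2)^s\geqslant 1$, convergence in $\|\cdot\|_{2,s}$ implies convergence in $\|\cdot\|_{\mathscr{S}_2}$, so $\widehat{f_n}\to\phi$ in $\mathscr{S}_2(\widehat{G},E)$ as well; by the Plancherel isometry of \cite{Assiamoua}, $\mathscr{S}_2(\widehat{G},E)=\widehat{L^2(G,E)}$, so there is $f\in L^2(G,E)$ with $\widehat{f}=\phi$. Since $\phi\in\mathscr{S}_{2,s}(\widehat{G},E)$, this $f$ lies in $H_\gamma^s(G,E)$, and $\|f_n-f\|_{H_\gamma^s}=\|\widehat{f_n}-\phi\|_{2,s}\to 0$. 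Hence $H_\gamma^s(G,E)$ is complete, and being a normed vector space it is a Banach space.

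I expect the only genuinely delicate point to be the completeness of the weighted model space $\mathscr{S}_{2,s}(\widehat{G},E)$ — equivalently, making rigorous the limit-interchange when one passes from termwise convergence of Cauchy sequences to convergence in the weighted norm. This is handled cleanly by noting that the diagonal scaling $\phi\mapsto\bigl((1+\gamma(\sigma)^2)^{s/2}\phi_\sigma\bigr)_\sigma$ is an isometric isomorphism $\mathscr{S}_{2,s}(\widehat{G},E)\to\mathscr{S}_2(\widehat{G},E)$, so completeness is inherited directly from the already-cited completeness of $\mathscr{S}_2(\widehat{G},E)$ in \cite{Mensah1}; no new Fatou argument is actually needed. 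Everything else — that $\|\cdot\|_{H_\gamma^s}$ is a norm (positive-definiteness uses injectivity of the Fourier transform on $L^2(G,E)$, which follows from the inversion formula) and that the Fourier transform is an isometry onto its image — is routine.
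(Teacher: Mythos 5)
Your proof is correct and takes essentially the same route as the paper: both reduce completeness of $H_\gamma^s(G,E)$ to the known completeness of $\mathscr{S}_2(\widehat{G},E)$ by combining the Plancherel isometry with the diagonal weight $(1+\gamma(\sigma)^2)^{s/2}$ (your two-step factorization through $\mathscr{S}_{2,s}(\widehat{G},E)$ composes to exactly the paper's map $f\longmapsto(1+\gamma(\sigma)^2)^{\frac{s}{2}}\widehat{f}$). The only difference is that you spell out the surjectivity/closedness step --- identifying the limit of the transformed Cauchy sequence as the Fourier transform of an $L^2$ function lying in $H_\gamma^s(G,E)$ --- which the paper's one-line proof simply asserts by calling that map an isometric bijection onto $\mathscr{S}_2(\widehat{G},E)$.
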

\begin{proof}
The map $f\longmapsto \left(1+\gamma (\sigma)^2\right)^{\frac{s}{2}}\widehat{f}$ is an isometric bijection from $H_{\gamma}^s (G,E))$ onto $\mathscr{S}_2(\widehat{G},E)$. Since $\mathscr{S}_2(\widehat{G},E)$ is a Banach space, then so is $H_{\gamma}^s (G,E))$.
\end{proof}

\begin{theorem}
If $t>s$,  then $H_\gamma^t(G,E)\hookrightarrow H_\gamma^s(G,E)$ with $\|f\|_{H_\gamma^s}\leqslant \|f\|_{H_\gamma^t}$.
\end{theorem}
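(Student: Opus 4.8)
The plan is to reduce the statement to a termwise comparison of the two defining series, using the single elementary fact that $1+\gamma(\sigma)^2\geqslant 1$ for every $\sigma\in\widehat{G}$, since the $\gamma(\sigma)$ are nonnegative real numbers.

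First I would fix $f\in H_\gamma^t(G,E)$ and look at the weights $(1+\gamma(\sigma)^2)^s$ and $(1+\gamma(\sigma)^2)^t$ appearing in the two norms. Because $1+\gamma(\sigma)^2\geqslant 1$ and $t>s\geqslant 0$, the function $r\mapsto (1+\gamma(\sigma)^2)^r$ is nondecreasing in $r$, so $(1+\gamma(\sigma)^2)^s\leqslant (1+\gamma(\sigma)^2)^t$ for each $\sigma$. Multiplying the nonnegative quantity $d_\sigma\sum_{i,j}\|\widehat{f}(\sigma)(\xi_j^\sigma,\xi_i^\sigma)\|_E^2$ by these weights and summing over $\sigma\in\widehat{G}$ gives
\[
\|f\|_{H_\gamma^s}^2=\sum_{\sigma\in\widehat{G}}d_\sigma(1+\gamma(\sigma)^2)^s\sum_{i=1}^{d_\sigma}\sum_{j=1}^{d_\sigma}\|\widehat{f}(\sigma)(\xi_j^\sigma,\xi_i^\sigma)\|_E^2\leqslant \|f\|_{H_\gamma^t}^2<\infty .
\]
In particular the series defining $\|f\|_{H_\gamma^s}$ converges, so $f\in H_\gamma^s(G,E)$, which proves the inclusion $H_\gamma^t(G,E)\subseteq H_\gamma^s(G,E)$; taking square roots gives the stated norm inequality $\|f\|_{H_\gamma^s}\leqslant\|f\|_{H_\gamma^t}$.

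Finally I would observe that the inclusion map $\iota\colon H_\gamma^t(G,E)\to H_\gamma^s(G,E)$ is linear, and the norm inequality just established says exactly that $\iota$ is bounded with operator norm at most $1$; hence $H_\gamma^t(G,E)\hookrightarrow H_\gamma^s(G,E)$. There is no real obstacle here: the only thing being used is the monotonicity of $r\mapsto a^r$ for $a\geqslant 1$, and everything else is bookkeeping with the nonnegative series that define the two Sobolev norms. If one wanted to be slightly more careful, the only point to check is that each partial sum over a finite subset of $\widehat{G}$ satisfies the termwise bound, and then let the finite subset exhaust $\widehat{G}$; this makes the manipulation of the infinite series rigorous without invoking anything beyond monotone convergence of nonnegative series.
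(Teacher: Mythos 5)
Your proof is correct and follows essentially the same route as the paper, which likewise reduces everything to the termwise comparison $(1+\gamma(\sigma)^2)^s\leqslant(1+\gamma(\sigma)^2)^t$ coming from $1+\gamma(\sigma)^2\geqslant 1$ and $t>s$. Your version merely spells out the bookkeeping (summation, square roots, boundedness of the inclusion) that the paper leaves implicit.
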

\begin{proof}
The result comes from the fact that if $t>s$, then $(1+\gamma (\sigma)^2)^t>(1+\gamma (\sigma)^2)^s$ since $1+\gamma (\sigma)^2>1$.
\end{proof}
\begin{theorem} We  have
$H_\gamma^s(G,E)\hookrightarrow L^2(G,E)$ with $\|f\|_{L^2}\leqslant \|f\|_{H_\gamma^s}$.
\end{theorem}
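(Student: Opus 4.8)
The plan is to compare the defining series of the two norms term by term. For any $f \in H_\gamma^s(G,E)$, the key observation is that since $\gamma(\sigma) \geqslant 0$, we have $1 + \gamma(\sigma)^2 \geqslant 1$, and therefore $(1+\gamma(\sigma)^2)^s \geqslant 1$ for every $\sigma \in \widehat{G}$ (here using $s \geqslant 0$). First I would write out $\|f\|_{L^2}^2$ using the Plancherel-type isometry recalled in Section \ref{Preliminaries}, namely
\begin{equation*}
\|f\|_{L^2}^2 = \|\widehat{f}\|_{\mathscr{S}_2}^2 = \sum_{\sigma\in\widehat{G}} d_\sigma \sum_{i=1}^{d_\sigma}\sum_{j=1}^{d_\sigma} \|\widehat{f}(\sigma)(\xi_j^\sigma,\xi_i^\sigma)\|_E^2.
\end{equation*}

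Next I would bound each summand from above: since $(1+\gamma(\sigma)^2)^s \geqslant 1$, the term $d_\sigma \sum_{i,j}\|\widehat{f}(\sigma)(\xi_j^\sigma,\xi_i^\sigma)\|_E^2$ is at most $d_\sigma (1+\gamma(\sigma)^2)^s \sum_{i,j}\|\widehat{f}(\sigma)(\xi_j^\sigma,\xi_i^\sigma)\|_E^2$. Summing over $\sigma \in \widehat{G}$ (all terms being nonnegative, so the inequality is preserved under the sum) yields $\|f\|_{L^2}^2 \leqslant \|f\|_{H_\gamma^s}^2$. In particular the right-hand side is finite, so $f \in L^2(G,E)$, which shows $H_\gamma^s(G,E) \subseteq L^2(G,E)$ as sets; and taking square roots gives $\|f\|_{L^2} \leqslant \|f\|_{H_\gamma^s}$, which is exactly the continuity of the inclusion map (a linear map bounded by constant $1$).

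There is essentially no obstacle here: the argument is a one-line termwise domination, completely parallel to the proof of the preceding theorem (the case $t > s = 0$, morally), the only subtlety being to invoke the Plancherel identity for the $L^2$ norm rather than leaving it as an abstract integral. I would keep the write-up to two or three sentences.
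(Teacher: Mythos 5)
Your argument is correct and coincides with the paper's own proof: both pass to the Plancherel-type identity $\|f\|_{L^2}^2=\|\widehat{f}\|_{\mathscr{S}_2}^2$ and then use the termwise bound $(1+\gamma(\sigma)^2)^s\geqslant 1$ to dominate the series by $\|f\|_{H_\gamma^s}^2$. No differences worth noting.
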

\begin{proof}
Let $f\in H_\gamma^s(G,E)$. Then, 
\begin{align*}
\|f\|_{L^2}^2&=\|\widehat{f}\|_{\mathscr{S}_2}^2\\
 & =\sum\limits_{d_\sigma\in \widehat{G}}d_\sigma\sum\limits_{i=1}^{d_\sigma}\sum\limits_{j=1}^{d_\sigma}\|\widehat{f}(\sigma)(\xi_j^\sigma,\xi_i^\sigma)\|_E^2\\
 &\leqslant\sum\limits_{d_\sigma\in \widehat{G}}d_\sigma(1+\gamma (\sigma)^2)^s\sum\limits_{i=1}^{d_\sigma}\sum\limits_{j=1}^{d_\sigma}\|\widehat{f}(\sigma)(\xi_j^\sigma,\xi_i^\sigma)\|_E^2\\
 &=\|f\|_{H_\gamma^s}^2.
\end{align*}
\end{proof}

\begin{lemma}\label{Strong}
Let $\sigma$ be a continuous representation of $G$. Let $a\in G$ and let $\varepsilon>0$. There exists a neighborhood $U$ of $a$
such that $$\forall x\in U,\, |u_{i,j}^\sigma(x)-u_{i,j}^\sigma(a)|<\varepsilon.$$
\end{lemma}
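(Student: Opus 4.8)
The plan is to show that each coefficient function $u_{i,j}^\sigma$ is continuous on $G$, from which the statement follows immediately by the definition of continuity at the point $a$. Recall that $u_{i,j}^\sigma(x) = \langle \sigma(x)\xi_i^\sigma, \xi_j^\sigma\rangle_\sigma$, so the task reduces to showing that $x \mapsto \langle \sigma(x)\xi_i^\sigma, \xi_j^\sigma\rangle_\sigma$ is continuous. Since $\sigma$ is a continuous representation, by definition the map $x \mapsto \sigma(x)\xi$ is continuous from $G$ into $H_\sigma$ for every fixed $\xi \in H_\sigma$; in particular this holds for $\xi = \xi_i^\sigma$.

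First I would fix $a \in G$ and $\varepsilon > 0$. Using the continuity of $x \mapsto \sigma(x)\xi_i^\sigma$ at $a$, choose a neighborhood $U$ of $a$ such that $\|\sigma(x)\xi_i^\sigma - \sigma(a)\xi_i^\sigma\|_\sigma < \varepsilon / (\|\xi_j^\sigma\|_\sigma + 1)$ for all $x \in U$ (one may simply take $\|\xi_j^\sigma\|_\sigma = 1$ since the $\xi_j^\sigma$ form an orthonormal basis, but keeping the harmless $+1$ avoids dividing by something that could in principle be zero). Then for $x \in U$, the Cauchy–Schwarz inequality gives
\begin{align*}
|u_{i,j}^\sigma(x) - u_{i,j}^\sigma(a)| &= \left|\langle \sigma(x)\xi_i^\sigma - \sigma(a)\xi_i^\sigma,\, \xi_j^\sigma\rangle_\sigma\right| \\
&\leqslant \|\sigma(x)\xi_i^\sigma - \sigma(a)\xi_i^\sigma\|_\sigma \, \|\xi_j^\sigma\|_\sigma < \varepsilon.
\end{align*}
This is exactly the claimed inequality on $U$, so the proof is complete.

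There is essentially no obstacle here: the lemma is a direct unwinding of the definition of a continuous unitary representation together with the Cauchy–Schwarz inequality. The only point requiring the slightest care is keeping track of the normalization constant when translating the $\varepsilon$ for $\sigma(x)\xi_i^\sigma$ into the $\varepsilon$ for the coefficient function, and noting that $\|\xi_j^\sigma\|_\sigma = 1$ since we chose $\{\xi_1^\sigma, \dots, \xi_{d_\sigma}^\sigma\}$ to be an orthonormal basis of $H_\sigma$. Note also that the dimension $d_\sigma$ is finite whenever $\sigma$ is irreducible, but finiteness is not even needed for this argument — only that $H_\sigma$ is a Hilbert space so that Cauchy–Schwarz applies.
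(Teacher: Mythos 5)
Your proof is correct, and it follows the same basic skeleton as the paper's (pick a neighborhood from continuity, then apply Cauchy--Schwarz to the matrix coefficient), but the continuity you invoke is genuinely different and in fact more faithful to the paper's own definitions. The paper's proof chooses $U$ so that $\|\sigma(x)-\sigma(a)\|<\varepsilon$ in \emph{operator norm} and then estimates $|\langle(\sigma(x)-\sigma(a))\xi_i^\sigma,\xi_j^\sigma\rangle_\sigma|\leqslant\|\sigma(x)-\sigma(a)\|$; but the paper defines a continuous representation only by strong continuity ($x\mapsto\sigma(x)\xi$ continuous for each fixed $\xi$), and strong continuity does not imply norm continuity in general, so the paper's first step is only justified because the relevant $H_\sigma$ are finite dimensional ($\sigma\in\widehat{G}$, $G$ compact). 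You instead apply strong continuity directly to the single vector $\xi_i^\sigma$ and bound $|\langle\sigma(x)\xi_i^\sigma-\sigma(a)\xi_i^\sigma,\xi_j^\sigma\rangle_\sigma|$ by $\|\sigma(x)\xi_i^\sigma-\sigma(a)\xi_i^\sigma\|_\sigma\|\xi_j^\sigma\|_\sigma$, which needs no finite dimensionality and matches the stated definition exactly; this is a small but real gain in rigor and generality. The only caveat is that your $U$ depends on the index $i$ (it is automatically uniform in $j$ since $\|\xi_j^\sigma\|_\sigma=1$); if the lemma is read as asserting one $U$ valid for all pairs $(i,j)$ simultaneously---which is how it is used in the subsequent continuity lemma---you should intersect your neighborhoods over the finitely many $i=1,\dots,d_\sigma$, whereas the paper's operator-norm formulation delivers that uniformity automatically.
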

\begin{proof}
 Since the representation $\sigma$ is continuous, there exists  a neighbourhood $U$ of $a$ such that
 $\|\sigma(x)-\sigma(a)\|<\varepsilon$ whenever $x\in U$.
\begin{align*}
|u^\sigma_{i,j}(x)-u^\sigma_{i,j}(a)|&=|\langle \sigma(x)\xi^\sigma_i,\xi^\sigma_j\rangle_\sigma-\langle \sigma(a)\xi^\sigma_i,\xi^\sigma_j\rangle_\sigma|\\
 &=|\langle \left(\sigma(x)-\sigma(a)\right)\xi^\sigma_i, \xi^\sigma_j\rangle_\sigma|\\
 &\leqslant \|\sigma(x)-\sigma(a)\|\|\xi^\sigma_i\|\|\xi^\sigma_j\|\\
 &\leqslant \|\sigma(x)-\sigma(a)\|<\varepsilon.
 \end{align*}
 where meanwhile we have used the Cauchy-Schwarz inequality, the boundedness of the operator $\sigma(x)-\sigma(a)$ and the fact that the $\xi_i^\sigma$'s are unit vectors.
\end{proof}
\begin{lemma}\label{bound}
Assume that $\sum\limits_{\sigma\in \widehat{G}}\displaystyle\frac{d_\sigma ^3}{(1+\gamma(\sigma)^2)^s}<\infty$. If  $f\in H_\gamma^s(G,E)$,  then $f$ is continuous on $G$.
\end{lemma}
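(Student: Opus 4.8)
The plan is to show that the inversion series representing $f$ converges \emph{normally} (hence uniformly) on $G$, so that its sum is a continuous $E$-valued function; since this sum equals $f$ in $L^2(G,E)$, i.e. almost everywhere, $f$ then coincides with a continuous function and is therefore continuous. First I would write out the inversion formula
$$f(x)=\sum_{\sigma\in\widehat{G}}d_\sigma\sum_{i=1}^{d_\sigma}\sum_{j=1}^{d_\sigma}\widehat{f}(\sigma)(\xi_j^\sigma,\xi_i^\sigma)u_{i,j}^\sigma(x),$$
and observe that every partial sum over a finite subset of $\widehat{G}$ is a finite linear combination, with constant coefficients in $E$, of the scalar coefficient functions $u_{i,j}^\sigma$; by Lemma \ref{Strong} each $u_{i,j}^\sigma$ is continuous, so each such partial sum is a continuous map $G\to E$.

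Next I would estimate, uniformly in $x$, the norm of the $\sigma$-block of the series. Since $\sigma(x)$ is unitary and the $\xi_k^\sigma$ are unit vectors, the Cauchy--Schwarz inequality gives $|u_{i,j}^\sigma(x)|\leqslant 1$ for every $x\in G$, whence
$$\Bigl\|d_\sigma\sum_{i,j}\widehat{f}(\sigma)(\xi_j^\sigma,\xi_i^\sigma)u_{i,j}^\sigma(x)\Bigr\|_E\leqslant d_\sigma\sum_{i,j}\|\widehat{f}(\sigma)(\xi_j^\sigma,\xi_i^\sigma)\|_E .$$
Applying Cauchy--Schwarz to the sum over the $d_\sigma^2$ pairs $(i,j)$ bounds the right-hand side by $d_\sigma^2\bigl(\sum_{i,j}\|\widehat{f}(\sigma)(\xi_j^\sigma,\xi_i^\sigma)\|_E^2\bigr)^{1/2}$, which I would rewrite as $\dfrac{d_\sigma^{3/2}}{(1+\gamma(\sigma)^2)^{s/2}}\cdot\bigl(d_\sigma(1+\gamma(\sigma)^2)^s\sum_{i,j}\|\widehat{f}(\sigma)(\xi_j^\sigma,\xi_i^\sigma)\|_E^2\bigr)^{1/2}$.

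Then I would sum over $\sigma\in\widehat{G}$ and apply the Cauchy--Schwarz inequality a second time, now over $\widehat{G}$: the first factor has square summing to $\sum_{\sigma}\frac{d_\sigma^3}{(1+\gamma(\sigma)^2)^s}$, finite by hypothesis, and the second factor has square summing to $\|f\|_{H_\gamma^s}^2<\infty$. This yields $\sum_{\sigma}\sup_{x\in G}\bigl\|d_\sigma\sum_{i,j}\widehat{f}(\sigma)(\xi_j^\sigma,\xi_i^\sigma)u_{i,j}^\sigma(x)\bigr\|_E<\infty$, so the inversion series converges normally, hence uniformly, on $G$. A uniform limit of continuous $E$-valued functions being continuous, the sum is continuous, and as it equals $f$ in $L^2(G,E)$, the function $f$ is (after identification with this representative) continuous.

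I expect no serious obstacle here; the only point requiring care is the standard bookkeeping that the inversion formula is an $L^2$-identity, so the conclusion is really that $f$ admits a continuous representative. Everything else reduces to the elementary pointwise bound $|u_{i,j}^\sigma|\leqslant 1$, two successive applications of Cauchy--Schwarz (once over the index pairs $(i,j)$, once over $\widehat{G}$, the latter consuming the summability hypothesis), and the continuity of the coefficient functions furnished by Lemma \ref{Strong}.
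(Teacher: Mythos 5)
Your proof is correct, but it is organized differently from the paper's. The paper argues pointwise at an arbitrary $a\in G$: it takes the neighborhood $U$ furnished by Lemma \ref{Strong}, inserts the bound $|u_{i,j}^\sigma(x)-u_{i,j}^\sigma(a)|<\varepsilon$ into the inversion series, and then applies H\"older (Cauchy--Schwarz) once, over $\sigma$ and the pairs $(i,j)$ jointly with the weights $(1+\gamma(\sigma)^2)^{\pm s/2}$, to get $\|f(x)-f(a)\|_E\leqslant\varepsilon\,C(\gamma,s)\|f\|_{H_\gamma^s}$. You instead prove normal (hence uniform) convergence of the inversion series using only $|u_{i,j}^\sigma|\leqslant 1$, two successive Cauchy--Schwarz applications (over $(i,j)$, then over $\widehat{G}$, the latter consuming the hypothesis $\sum_\sigma d_\sigma^3(1+\gamma(\sigma)^2)^{-s}<\infty$), and conclude by continuity of uniform limits; your block estimates are all correct. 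Your route buys two things the paper's write-up glosses over: first, Lemma \ref{Strong} produces a neighborhood $U$ depending on the single representation $\sigma$, whereas the paper's displayed estimate uses one $U$ for all infinitely many $\sigma$ simultaneously --- repairing this requires exactly the tail-splitting that uniform convergence packages automatically, so your argument is the more airtight one; second, you correctly note that the inversion formula is an $L^2$-identity, so the honest conclusion is that $f$ admits a continuous representative, a point the paper treats as if the formula held pointwise. Conversely, the paper's approach is shorter and yields the explicit modulus-of-continuity-type bound $\varepsilon\,C(\gamma,s)\|f\|_{H_\gamma^s}$ directly, with the same constant $C(\gamma,s)$ that reappears in the subsequent sup-norm lemma.
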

\begin{proof}
Let $a\in G$ and let $U$ be like in Lemma \ref{Strong}. 
Let $f\in H_\gamma^s(G,E)$. If $x\in U$, then
\begin{align*}
\|f(x)-f(a)\|_E  &=\left\|\sum\limits_{\sigma\in \widehat{G}}d_\sigma\sum\limits_{i=1}^{d_\sigma}\sum\limits_{j=1}^{d_\sigma}\widehat{f}(\sigma)(\xi_j^\sigma,\xi_i^\sigma)(u_{i,j}^\sigma(x)-u_{i,j}^\sigma(a)) \right\|_E\\
&\leqslant \sum\limits_{\sigma\in \widehat{G}}d_\sigma\sum\limits_{i=1}^{d_\sigma}\sum\limits_{j=1}^{d_\sigma}\left\|\widehat{f}(\sigma)(\xi_j^\sigma,\xi_i^\sigma)\right\|_{E}|u_{i,j}^\sigma(x)-u_{i,j}^\sigma(a)| \\
& \leqslant\varepsilon \sum\limits_{\sigma\in \widehat{G}}d_\sigma\sum\limits_{i=1}^{d_\sigma}\sum\limits_{j=1}^{d_\sigma}\left\|\widehat{f}(\sigma)(\xi_j^\sigma,\xi_i^\sigma)\right\|_{E}\,\mbox{(by Lemma \ref{Strong})}\\
&=\varepsilon \sum\limits_{\sigma\in \widehat{G}}d_\sigma\sum\limits_{i=1}^{d_\sigma}\sum\limits_{j=1}^{d_\sigma}(1+\gamma(\sigma)^2)^{\frac{s}{2}}\left\|\widehat{f}(\sigma)(\xi_j^\sigma,\xi_i^\sigma)\right\|_{E}(1+\gamma(\sigma)^2)^{-\frac{s}{2}}.
\end{align*}
Now, applying the H\"older inequality, we have
\begin{align*}
\|f(x)-f(a)\|_E &\leqslant\varepsilon \left(\sum\limits_{\sigma\in \widehat{G}}d_\sigma\sum\limits_{i=1}^{d_\sigma}\sum\limits_{j=1}^{d_\sigma} (1+\gamma(\sigma)^2)^s\|\widehat{f}(\sigma)(\xi_j^\sigma,\xi_i^\sigma)\|^2_{E}\right)^{\frac{1}{2}}\times \\
&\times\left(\sum\limits_{\sigma\in \widehat{G}}d_\sigma\sum\limits_{i=1}^{d_\sigma}\sum\limits_{j=1}^{d_\sigma}      (1+\gamma (\sigma)^2)^{-s}\right)^{\frac{1}{2}}\\
&=\varepsilon \left(\sum\limits_{\sigma\in \widehat{G}}d_\sigma\sum\limits_{i=1}^{d_\sigma}\sum\limits_{j=1}^{d_\sigma} (1+\gamma(\sigma)^2)^s\|\widehat{f}(\sigma)(\xi_j^\sigma,\xi_i^\sigma)\|^2_{E}\right)^{\frac{1}{2}}\left( \sum\limits_{\sigma\in \widehat{G}}d_\sigma^3(1+\gamma (\sigma)^2)^{-s}\right)^{\frac{1}{2}}\\
&=\varepsilon \|f\|_{H_\gamma^s}\left(       \sum\limits_{\sigma\in \widehat{G}}d_\sigma^3(1+\gamma (\sigma)^2)^{-s}\right)^{\frac{1}{2}}.
\end{align*}
Thus, $f$ is continuous at $a$. Since $a$ is an arbitrary element of $G$, then $f$ is continuous on $G$.
\end{proof}

\begin{lemma}\label{bound}
Assume that $\sum\limits_{\sigma\in \widehat{G}}\displaystyle\frac{d_\sigma ^3}{(1+\gamma(\sigma)^2)^s}<\infty$. If $f\in  H_\gamma^s(G,E)$, then there exists  a constant $C(\gamma,s)$, depending only on $\gamma$ and $s$, such that $$\|f\|_{\infty}:=\sup\{\|f(x)\|_E : x\in G\}\leqslant C(\gamma,s)\|f\|_{H_\gamma^s}.$$ 
\end{lemma}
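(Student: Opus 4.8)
The plan is to mimic the estimate already carried out in the proof of the preceding lemma (the continuity lemma), where the same hypothesis $\sum_{\sigma\in\widehat G} d_\sigma^3(1+\gamma(\sigma)^2)^{-s}<\infty$ was used. First I would fix $x\in G$ and write $f(x)$ via the inversion formula
\[
f(x)=\sum_{\sigma\in\widehat G}d_\sigma\sum_{i=1}^{d_\sigma}\sum_{j=1}^{d_\sigma}\widehat f(\sigma)(\xi_j^\sigma,\xi_i^\sigma)\,u_{i,j}^\sigma(x),
\]
then apply the triangle inequality for the Bochner norm and the bound $|u_{i,j}^\sigma(x)|\leqslant 1$, which follows from the Cauchy--Schwarz inequality and the fact that the $\xi_i^\sigma$ are unit vectors (exactly as in Lemma \ref{Strong}). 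This gives
\[
\|f(x)\|_E\leqslant \sum_{\sigma\in\widehat G}d_\sigma\sum_{i=1}^{d_\sigma}\sum_{j=1}^{d_\sigma}\|\widehat f(\sigma)(\xi_j^\sigma,\xi_i^\sigma)\|_E .
\]

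Next I would insert the factor $(1+\gamma(\sigma)^2)^{s/2}(1+\gamma(\sigma)^2)^{-s/2}=1$ into each term and apply the Cauchy--Schwarz (Hölder with $p=q=2$) inequality over the index set $\{(\sigma,i,j)\}$ with weights $d_\sigma$, precisely as in the previous lemma. One factor is then $\|f\|_{H_\gamma^s}$ and the other is
\[
\Bigl(\sum_{\sigma\in\widehat G}d_\sigma\sum_{i=1}^{d_\sigma}\sum_{j=1}^{d_\sigma}(1+\gamma(\sigma)^2)^{-s}\Bigr)^{1/2}
=\Bigl(\sum_{\sigma\in\widehat G}\frac{d_\sigma^3}{(1+\gamma(\sigma)^2)^{s}}\Bigr)^{1/2}=:C(\gamma,s),
\]
which is finite by hypothesis and depends only on $\gamma$ and $s$. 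Taking the supremum over $x\in G$ yields $\|f\|_\infty\leqslant C(\gamma,s)\|f\|_{H_\gamma^s}$, as required.

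There is essentially no obstacle here: the estimate is the same one used to prove continuity, only with $u_{i,j}^\sigma(x)-u_{i,j}^\sigma(a)$ replaced by $u_{i,j}^\sigma(x)$ and the crude bound $|u_{i,j}^\sigma(x)|\leqslant 1$ in place of Lemma \ref{Strong}. The only point worth a word of care is the interchange of the norm with the infinite sum in the inversion formula, i.e. that the series $\sum_\sigma d_\sigma\sum_{i,j}\widehat f(\sigma)(\xi_j^\sigma,\xi_i^\sigma)u_{i,j}^\sigma(x)$ converges absolutely in $E$; but this is exactly what the Cauchy--Schwarz bound above establishes, so the manipulation is justified a posteriori (and in any case the convergence in $L^2(G,E)$ together with the just-proved continuity pins down the pointwise values). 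Hence the argument is a routine variant of the preceding proof.
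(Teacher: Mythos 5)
Your proposal is correct and follows essentially the same route as the paper's own proof: the inversion formula, the bound $|u_{i,j}^\sigma(x)|\leqslant 1$ from unitarity and Cauchy--Schwarz, insertion of the factor $(1+\gamma(\sigma)^2)^{\pm s/2}$, and H\"older over $(\sigma,i,j)$ yielding the constant $C(\gamma,s)=\bigl(\sum_{\sigma\in\widehat{G}}d_\sigma^3(1+\gamma(\sigma)^2)^{-s}\bigr)^{1/2}$. Your added remark about justifying the absolute convergence of the series is a sensible extra precaution that the paper itself glosses over.
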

\begin{proof}
 Let $x\in G$. Then,
\begin{align*}
\|f(x)\|_{E}&=\left\|\sum\limits_{\sigma\in \widehat{G}}d_\sigma\sum\limits_{i=1}^{d_\sigma}\sum\limits_{j=1}^{d_\sigma}\widehat{f}(\sigma)(\xi_j^\sigma,\xi_i^\sigma)u_{i,j}^\sigma(x)\right\|_{E}\\
&\leqslant \sum\limits_{\sigma\in \widehat{G}}d_\sigma \sum\limits_{i=1}^{d_\sigma}\sum\limits_{j=1}^{d_\sigma}|u_{i,j}^\sigma(x)|\| \widehat{f}(\sigma)(\xi_j^\sigma,\xi_i^\sigma)\|_{E}.
\end{align*}
Since $\sigma$ is a unitary representation, then by the Cauchy-Schwarz inequality, $$|u^\sigma_{i,j}(x)|=|\langle \sigma(x)\xi_i,\xi_j\rangle|\leqslant \|\sigma (x)\|\|\xi_i^\sigma\|\|\xi_j^\sigma\|=1.$$
Then, 
\begin{align*}
\|f(x)\|_{E}&\leqslant \sum\limits_{\sigma\in \widehat{G}}d_\sigma \sum\limits_{i=1}^{d_\sigma}\sum\limits_{j=1}^{d_\sigma}\| \widehat{f}(\sigma)(\xi_j^\sigma,\xi_i^\sigma)\|_{E}\\
&=\sum\limits_{\sigma\in \widehat{G}}d_\sigma\sum\limits_{i=1}^{d_\sigma}\sum\limits_{j=1}^{d_\sigma}(1+\gamma(\sigma)^2)^{\frac{s}{2}}\left\|\widehat{f}(\sigma)(\xi_j^\sigma,\xi_i^\sigma)\right\|_{E}(1+\gamma(\sigma)^2)^{-\frac{s}{2}}.
\end{align*}
By the H\"older inequality, we obtain
\begin{align*}
\|f(x)\|_{E}&\leqslant \left(\sum\limits_{\sigma\in \widehat{G}}d_\sigma\sum\limits_{i=1}^{d_\sigma}\sum\limits_{j=1}^{d_\sigma} (1+\gamma(\sigma)^2)^s\|\widehat{f}(\sigma)(\xi_j^\sigma,\xi_i^\sigma)\|^2_{E}\right)^{\frac{1}{2}}\left(\sum\limits_{\sigma\in \widehat{G}}d_\sigma\sum\limits_{i=1}^{d_\sigma}\sum\limits_{j=1}^{d_\sigma}      (1+\gamma (\sigma)^2)^{-s}\right)^{\frac{1}{2}}\\
&= \left(\sum\limits_{\sigma\in \widehat{G}}d_\sigma\sum\limits_{i=1}^{d_\sigma}\sum\limits_{j=1}^{d_\sigma} (1+\gamma(\sigma)^2)^s\|\widehat{f}(\sigma)(\xi_j^\sigma,\xi_i^\sigma)\|^2_{E}\right)^{\frac{1}{2}}\left(       \sum\limits_{\sigma\in \widehat{G}}d_\sigma^3(1+\gamma (\sigma)^2)^{-s}\right)^{\frac{1}{2}}\\
&= \|f\|_{H_\gamma^s}\left(\sum\limits_{\sigma\in \widehat{G}}d_\sigma^3(1+\gamma (\sigma)^2)^{-s}\right)^{\frac{1}{2}}\\
&=C(\gamma,s)\|f\|_{H_\gamma^s}<\infty
\end{align*}
where $C(\gamma,s)=\left(\sum\limits_{\sigma\in \widehat{G}}d_\sigma^3(1+\gamma (\sigma)^2)^{-s}\right)^{\frac{1}{2}}$. Hence,  $$\|f\|_\infty\leqslant C(\gamma,s)\|f\|_{H_\gamma^s}.$$
\end{proof}
Let us denote by $\mathcal{C}(G,E)$ the space of $E$-valued continuous  functions on $G$.
\begin{theorem}
If $\sum\limits_{\sigma\in\widehat{G}}\displaystyle\frac{d_\sigma ^3}{(1+\gamma(\sigma)^2)^s}<\infty$, then $H_\gamma^s(G,E)\hookrightarrow \mathcal{C}(G,E)$. 
\end{theorem}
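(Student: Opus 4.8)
The plan is to obtain the embedding simply by combining the two lemmas established above. Recall that asserting $H_\gamma^s(G,E)\hookrightarrow\mathcal{C}(G,E)$ amounts to two things: first, that every element of $H_\gamma^s(G,E)$ actually lies in $\mathcal{C}(G,E)$, so that the inclusion map makes sense; and second, that this (clearly linear) inclusion is bounded when $\mathcal{C}(G,E)$ is equipped with its natural uniform norm $\|g\|_\infty=\sup_{x\in G}\|g(x)\|_E$.

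For the first point, I would invoke the lemma showing that, under the hypothesis $\sum_{\sigma\in\widehat{G}} d_\sigma^3(1+\gamma(\sigma)^2)^{-s}<\infty$, each $f\in H_\gamma^s(G,E)$ is continuous on $G$; hence $H_\gamma^s(G,E)\subseteq\mathcal{C}(G,E)$ and the inclusion $\iota\colon H_\gamma^s(G,E)\to\mathcal{C}(G,E)$ is well defined and linear. For the second point, I would invoke the lemma producing the constant $C(\gamma,s)=\big(\sum_{\sigma\in\widehat{G}} d_\sigma^3(1+\gamma(\sigma)^2)^{-s}\big)^{1/2}$ together with the estimate $\|f\|_\infty\leqslant C(\gamma,s)\|f\|_{H_\gamma^s}$ valid for all $f\in H_\gamma^s(G,E)$. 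This says precisely that $\iota$ is bounded, with operator norm at most $C(\gamma,s)$, which is exactly the assertion $H_\gamma^s(G,E)\hookrightarrow\mathcal{C}(G,E)$.

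There is no real obstacle remaining: the analytic content — the pointwise estimate coming from the inversion formula together with $|u_{i,j}^\sigma(x)|\leqslant 1$, the splitting $1=(1+\gamma(\sigma)^2)^{s/2}(1+\gamma(\sigma)^2)^{-s/2}$, the Cauchy--Schwarz/H\"older step, and the observation that $\sum_{i}\sum_{j}1=d_\sigma^2$ which turns the weight $d_\sigma$ into $d_\sigma^3$ in the convergent majorizing series — has already been carried out in the two preceding lemmas. Thus the proof reduces to the remark that ``continuous inclusion'' unpacks into these two already-proven statements. If a self-contained argument were preferred, one could instead repeat verbatim the computation of the sup-bound lemma, the finiteness of the majorizing series being guaranteed by the standing hypothesis, and deduce both the continuity of $f$ and the norm inequality in one stroke.
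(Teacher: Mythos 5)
Your proposal is correct and follows essentially the same route as the paper: the theorem is obtained by combining the lemma giving continuity of each $f\in H_\gamma^s(G,E)$ with the lemma giving the bound $\|f\|_\infty\leqslant C(\gamma,s)\|f\|_{H_\gamma^s}$, which together say exactly that the inclusion into $\mathcal{C}(G,E)$ is well defined and bounded. In fact your citation of the two lemmas is cleaner than the paper's own wording, which (owing to a duplicated label) points to the lemma on the coefficients $u_{i,j}^\sigma$ rather than to the continuity lemma it actually means.
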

\begin{proof}
This theorem is the conjunction of Lemma \ref{Strong}  by which $f$ is continuous  and Lemma \ref{bound} by which  the continuous embedding inequality $\|f\|_\infty\leqslant C(\gamma,s)\|f\|_{H_\gamma^s}$ holds.
\end{proof}

\begin{lemma}\label{douzoa}
Let $\phi\in \prod\limits_{\sigma \in \widehat{G}}\mathscr{S}(H_\sigma \times H_\sigma, E)$. If $1\leqslant p\leqslant q$, then 
$$\left(\sum\limits_{i=1}^{d_\sigma}\sum\limits_{j=1}^{d_\sigma}\|\phi(\sigma)(\xi_j^\sigma,\xi_i^\sigma)\|_E^p\right)^{\frac{1}{p}}\leq (d_\sigma ^2)^{\frac{1}{p}-\frac{1}{q}}\left(\sum\limits_{i=1}^{d_\sigma}\sum\limits_{j=1}^{d_\sigma}\|\phi(\sigma)(\xi_j^\sigma,\xi_i^\sigma)\|_E^q\right)^{\frac{1}{q}}.$$
\end{lemma}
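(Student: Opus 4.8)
The plan is to reduce the statement directly to the elementary norm comparison inequality (\ref{Comparison}) recalled at the end of Section \ref{Preliminaries}. First I would fix $\sigma \in \widehat{G}$ and view the double sum over $1 \leqslant i,j \leqslant d_\sigma$ as a single sum over the index set of the $d_\sigma^2$ ordered pairs $(i,j)$. Concretely, after choosing any enumeration of these pairs, I would form the vector $x = \big(x_{(i,j)}\big)_{1\leqslant i,j\leqslant d_\sigma}$ whose entries are the nonnegative real numbers $x_{(i,j)} = \|\phi(\sigma)(\xi_j^\sigma,\xi_i^\sigma)\|_E$; this vector lives in $\mathbb{R}^n$ with $n = d_\sigma^2$.

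With this identification, the left-hand side of the claimed inequality is exactly $\|x\|_p$ and the right-hand side is $(d_\sigma^2)^{\frac1p-\frac1q}\|x\|_q = n^{\frac1p-\frac1q}\|x\|_q$. So the estimate to be proved is precisely the second inequality in (\ref{Comparison}), namely $\|x\|_p \leqslant n^{\frac1p-\frac1q}\|x\|_q$, which holds for $1\leqslant p\leqslant q\leqslant\infty$. Applying (\ref{Comparison}) with $n = d_\sigma^2$ then yields the lemma.

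There is essentially no obstacle here; the only point requiring a moment's care is that (\ref{Comparison}) is phrased for tuples of real (or complex) numbers, whereas our entries $\|\phi(\sigma)(\xi_j^\sigma,\xi_i^\sigma)\|_E$ are nonnegative reals, so it applies verbatim. As an alternative to citing (\ref{Comparison}), one could give a self-contained derivation by applying H\"older's inequality with conjugate exponents $\frac{q}{p}$ and $\frac{q}{q-p}$ to $\sum_{i,j} x_{(i,j)}^{p}\cdot 1$, but invoking (\ref{Comparison}) is the cleanest route.
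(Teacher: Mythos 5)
Your proof is correct and follows exactly the paper's route: the paper likewise proves the lemma by applying the right-hand inequality in (\ref{Comparison}) to the vector of $d_\sigma^2$ entries $\|\phi(\sigma)(\xi_j^\sigma,\xi_i^\sigma)\|_E$, with $n = d_\sigma^2$. Your write-up just makes the identification of the double sum with a single $d_\sigma^2$-tuple explicit, which the paper leaves implicit.
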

\begin{proof}
Use the right handside inequality in (\ref{Comparison}).
\end{proof}
\begin{theorem}
Let $t>s>0$. Set $\alpha'=\displaystyle\frac{2t}{t-s}$. If  $\sum\limits_{\sigma\in \widehat{G}}\displaystyle\frac{d_\sigma^3}{(1+\gamma (\sigma)^2)^t}<\infty$, 

then  $H_\gamma^s(G,E)\hookrightarrow L^{\alpha'}(G,E)$ with 
$\|f\|_{L^{\alpha'}}\leqslant\left(\sum\limits_{\sigma\in \widehat{G}}\frac{d_\sigma^3}{(1+\gamma (\sigma)^2)^t} \right)^{\frac{s}{2t}} \|f\|_{H_\gamma^s}.$
\end{theorem}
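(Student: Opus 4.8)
The plan is to run the standard argument for Sobolev embeddings of this type: a Hausdorff--Young inequality converts the $L^{\alpha'}$ norm of $f$ into a summability condition on its Fourier coefficients, and then a single application of H\"older's inequality pairs that condition against the convergent series in the hypothesis.

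First I would fix the exponents. Since $t>s>0$ we have $\alpha'=\tfrac{2t}{t-s}\in(2,\infty)$, and its conjugate exponent is $\beta:=\tfrac{\alpha'}{\alpha'-1}=\tfrac{2t}{t+s}$, which lies in $(1,2)$ ($t>s$ gives $\beta>1$, $s>0$ gives $\beta<2$). The analytic ingredient I need is the vector-valued Hausdorff--Young inequality
\[
\|f\|_{L^{\alpha'}}\leqslant\|\widehat f\|_{\mathscr{S}_\beta},\qquad f\in L^2(G,E).
\]
I would get it by interpolation: the inverse Fourier transform maps $\mathscr{S}_1(\widehat G,E)$ into $L^\infty(G,E)$ with norm at most $1$ (because $|u_{i,j}^\sigma(x)|\leqslant 1$ for all $x\in G$, exactly as in the proof of Lemma~\ref{bound}) and, by the Plancherel theorem, is an isometry of $\mathscr{S}_2(\widehat G,E)$ onto $L^2(G,E)$; the Riesz--Thorin interpolation theorem for Bochner spaces then produces a bounded map $\mathscr{S}_\beta\to L^{\alpha'}$ of norm at most $1$, with interpolation parameter $\theta$ fixed by $\tfrac1\beta=1-\tfrac\theta2$. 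This direction of Hausdorff--Young is valid for an arbitrary Banach space $E$; alternatively it may be quoted from \cite{Assiamoua, Mensah1}.

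Next I would estimate $\|\widehat f\|_{\mathscr{S}_\beta}$ in terms of $\|f\|_{H_\gamma^s}$. Applying Lemma~\ref{douzoa} with $p=\beta\leqslant q=2$ to the inner sum gives, for each $\sigma$,
\[
\sum_{i,j}\|\widehat f(\sigma)(\xi_j^\sigma,\xi_i^\sigma)\|_E^\beta\leqslant d_\sigma^{\,2-\beta}\Big(\sum_{i,j}\|\widehat f(\sigma)(\xi_j^\sigma,\xi_i^\sigma)\|_E^2\Big)^{\beta/2},
\]
so that $\|\widehat f\|_{\mathscr{S}_\beta}^{\beta}\leqslant\sum_{\sigma}d_\sigma^{\,3-\beta}\big(\sum_{i,j}\|\widehat f(\sigma)(\xi_j^\sigma,\xi_i^\sigma)\|_E^2\big)^{\beta/2}$. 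Now I insert the weights $(1+\gamma(\sigma)^2)^{\pm s/2}$, split $d_\sigma^{\,3-\beta}=d_\sigma^{\,\beta/2}\cdot d_\sigma^{\,3-3\beta/2}$, and apply H\"older's inequality over $\sigma$ with conjugate exponents $\tfrac2\beta$ and $\tfrac2{2-\beta}$. The first factor is exactly $\|f\|_{H_\gamma^s}^{\beta}$; in the second factor the exponent of $d_\sigma$ collapses to $3$ (so $\sum_{i,j}1=d_\sigma^2$ may be used) and, since $\tfrac\beta{2-\beta}=\tfrac ts$, the exponent of $1+\gamma(\sigma)^2$ becomes $-t$, so the second factor is $\big(\sum_{\sigma}d_\sigma^{3}(1+\gamma(\sigma)^2)^{-t}\big)^{(2-\beta)/2}$, finite by hypothesis. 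Taking $\beta$-th roots and using $\tfrac{2-\beta}{2\beta}=\tfrac s{2t}$ gives
\[
\|\widehat f\|_{\mathscr{S}_\beta}\leqslant\Big(\sum_{\sigma\in\widehat G}\frac{d_\sigma^3}{(1+\gamma(\sigma)^2)^t}\Big)^{\frac s{2t}}\|f\|_{H_\gamma^s},
\]
and combining with the Hausdorff--Young inequality yields the claimed estimate, whence the embedding $H_\gamma^s(G,E)\hookrightarrow L^{\alpha'}(G,E)$ is continuous.

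I expect the only genuinely delicate point to be the Hausdorff--Young step; once that is available, the rest is bookkeeping with H\"older's inequality, the whole mechanism being that the exponent identities $\beta=\tfrac{2t}{t+s}$, $\tfrac{\beta}{2-\beta}=\tfrac ts$ and $\tfrac{2-\beta}{2\beta}=\tfrac s{2t}$ force all the powers to match. One may also bypass Lemma~\ref{douzoa} and apply H\"older directly to the triple sum $\sum_\sigma d_\sigma\sum_{i,j}$, with the same final constant.
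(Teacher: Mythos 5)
Your proposal is correct, and its computational core is identical to the paper's proof: the paper also works with the conjugate exponent $\alpha=\tfrac{2t}{t+s}\in(1,2)$ (your $\beta$), applies Lemma~\ref{douzoa} with exponents $(\alpha,2)$, inserts the weights $(1+\gamma(\sigma)^2)^{\pm s\alpha/2}$, splits the powers of $d_\sigma$ in exactly the way you describe, and applies H\"older over $\sigma$ with conjugate exponents $\tfrac{2}{\alpha}$ and $\tfrac{2}{2-\alpha}$, the identities $\tfrac{s\alpha}{2-\alpha}=t$ and $\tfrac{2-\alpha}{2\alpha}=\tfrac{s}{2t}$ producing the stated constant. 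The one genuine difference is the provenance of the key inequality $\|f\|_{L^{\alpha'}}\leqslant\|\widehat f\|_{\mathscr{S}_\alpha}$: the paper simply quotes it as the inverse Hausdorff--Young inequality from \cite[Lemma 5.1]{Garcia}, whereas you derive it by Riesz--Thorin interpolation between the trivial endpoint $\mathscr{S}_1\to L^\infty$ (from $|u_{i,j}^\sigma(x)|\leqslant 1$) and the Plancherel isometry $\mathscr{S}_2\to L^2$. Your derivation is legitimate in the paper's framework, since $\mathscr{S}_p(\widehat G,E)$ is just a Bochner $L^p$ space over the discrete measure giving mass $d_\sigma$ to each triple $(\sigma,i,j)$ and Riesz--Thorin with a fixed complex Banach space $E$ goes through by the usual three-lines argument (the $L^\infty$ endpoint is harmless in that operator formulation); what it buys is a self-contained proof instead of an external citation. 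Be aware, however, that your interpolation leans on the $\mathscr{S}_2\to L^2$ identity quoted from \cite{Assiamoua}, so the parenthetical claim that this direction of Hausdorff--Young holds ``for an arbitrary Banach space $E$'' is only as strong as that Plancherel statement (in general such an identity is a genuine restriction on $E$); the paper makes the same standing assumption, so this is a shared caveat rather than a gap. Finally, the alternative sources you suggest (\cite{Assiamoua}, \cite{Mensah1}) do not contain the needed inequality; the appropriate reference is \cite{Garcia}.
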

\begin{proof}
Let $\alpha$ be the H\"older conjugate of $\alpha'$. That is $\displaystyle\frac{1}{\alpha}+ \frac{1}{\alpha'}=1$. Then, $\alpha=\displaystyle\frac{2t}{s+t}$ and $\displaystyle\frac{s}{t}=\frac{2-\alpha}{\alpha}$. It follows that $1<\alpha<2$. By the inverse Hausdorff-Young inequality \cite[Lemma 5.1]{Garcia}, we have 
$$\|f\|_{L^{\alpha'}}\leqslant\|\widehat{f}
\|_{\mathscr{S}_\alpha}.$$

We have $\|\widehat{f}
\|_{\mathscr{S}_\alpha}^\alpha =\sum\limits_{\sigma\in\widehat{G}}d_\sigma\sum\limits_{i=1}^{d_\sigma}\sum\limits_{j=1}^{d_\sigma}\|\widehat{f}(\sigma)(\xi_j^\sigma, \xi_i^\sigma)\|_{E}^\alpha$. Using Lemma \ref{douzoa} with the fact that $1<\alpha<2$, we  attain
$$\left(\sum\limits_{i=1}^{d_\sigma}\sum\limits_{j=1}^{d_\sigma}\|\widehat{f}(\sigma)(\xi_j^\sigma, \xi_i^\sigma)\|_{E}^\alpha\right)^{\frac{1}{\alpha}}\leqslant (d_\sigma^2)^{\frac{1}{\alpha}-\frac{1}{2}}\left(\sum\limits_{i=1}^{d_\sigma}\sum\limits_{j=1}^{d_\sigma}\|\widehat{f}(\sigma)(\xi_j^\sigma, \xi_i^\sigma)\|_{E}^2\right)^{\frac{1}{2}}.$$
The latter inequality implies 
$$\sum\limits_{i=1}^{d_\sigma}\sum\limits_{j=1}^{d_\sigma}\|\widehat{f}(\sigma)(\xi_j^\sigma, \xi_i^\sigma)\|_{E}^\alpha\leqslant d_\sigma^{2-\alpha}\left(\sum\limits_{i=1}^{d_\sigma}\sum\limits_{j=1}^{d_\sigma}\|\widehat{f}(\sigma)(\xi_j^\sigma, \xi_i^\sigma)\|_{E}^2\right)^{\frac{\alpha}{2}}.$$
Therefore, 
\begin{align*}
\|\widehat{f}
\|_{\mathscr{S}_\alpha}^\alpha & \leqslant \sum\limits_{\sigma\in \widehat{G}}d_\sigma d_\sigma^{2-\alpha}\left(\sum\limits_{i=1}^{d_\sigma}\sum\limits_{j=1}^{d_\sigma}\|\widehat{f}(\sigma)(\xi_j^\sigma, \xi_i^\sigma)\|_{E}^2\right)^{\frac{\alpha}{2}}\\
&=\sum\limits_{\sigma\in \widehat{G}}d_\sigma (1+\gamma (\sigma)^2)^{\frac{s\alpha}{2}} \left(\sum\limits_{i=1}^{d_\sigma}\sum\limits_{j=1}^{d_\sigma}\|\widehat{f}(\sigma)(\xi_j^\sigma, \xi_i^\sigma)\|_{E}^2\right)^{\frac{\alpha}{2}}d_\sigma^{2-\alpha}(1+\gamma (\sigma)^2)^{-\frac{s\alpha}{2}}.
\end{align*}

Observe that $\displaystyle\frac{1}{\frac{2}{\alpha}}+\displaystyle\frac{1}{\frac{2}{2-\alpha}}=1$.
Now, apply  the H\"older inequality to obtain 
\begin{align*}
\|\widehat{f}
\|_{\mathscr{S}_\alpha}^\alpha & \leqslant \left(\sum\limits_{\sigma\in \widehat{G}}d_\sigma (1+\gamma (\sigma)^2)^s\sum\limits_{i=1}^{d_\sigma}\sum\limits_{j=1}^{d_\sigma}\|\widehat{f}(\sigma)(\xi_j^\sigma, \xi_i^\sigma)\|_{E}^2 \right)^{\frac{\alpha}{2}}\left(\sum\limits_{\sigma\in \widehat{G}}d_\sigma d_\sigma ^2 (1+\gamma (\sigma)^2)^{-\frac{s\alpha}{2-\alpha}}   \right)^{\frac{2-\alpha}{2}}\\
&=\left(\sum\limits_{\sigma\in \widehat{G}}d_\sigma (1+\gamma (\sigma)^2)^s\sum\limits_{i=1}^{d_\sigma}\sum\limits_{j=1}^{d_\sigma}\|\widehat{f}(\sigma)(\xi_j^\sigma, \xi_i^\sigma)\|_{E}^2 \right)^{\frac{\alpha}{2}}\left(\sum\limits_{\sigma\in \widehat{G}} \frac{d_\sigma^3}{(1+\gamma (\sigma)^2)^{\frac{s\alpha}{2-\alpha}}  }   \right)^{\frac{2-\alpha}{2}}.
\end{align*}
Thus,  
\begin{align*}
\|\widehat{f}
\|_{\mathscr{S}_\alpha} &\leqslant \left(\sum\limits_{\sigma\in \widehat{G}}d_\sigma (1+\gamma (\sigma)^2)^s\sum\limits_{i=1}^{d_\sigma}\sum\limits_{j=1}^{d_\sigma}\|\widehat{f}(\sigma)(\xi_j^\sigma, \xi_i^\sigma)\|_{E}^2 \right)^{\frac{1}{2}}\left(\sum\limits_{\sigma\in \widehat{G}} \frac{d_\sigma^3}{(1+\gamma (\sigma)^2)^{\frac{s\alpha}{2-\alpha}} } \right)^{\frac{2-\alpha}{2\alpha}}\\
&=\|f\|_{H_\gamma^s}\left(\sum\limits_{\sigma\in \widehat{G}}\frac{d_\sigma^3}{(1+\gamma (\sigma)^2)^t} \right)^{\frac{s}{2t}}.
\end{align*}
Finally, $\|f\|_{L^{\alpha'}}\leqslant\|\widehat{f}
\|_{\mathscr{S}_\alpha}\leqslant \|f\|_{H_\gamma^s}\left(\sum\limits_{\sigma\in \widehat{G}}\frac{d_\sigma^3}{(1+\gamma (\sigma)^2)^t} \right)^{\frac{s}{2t}}.$
\end{proof}

\section{Conclusion}
This paper, explores the embeddings of Sobolev-type spaces consisting of vector-valued functions on  compact groups. The main results include
continuous embeddings between Sobolev spaces,  between
Sobolev spaces and spaces of continuous functions and  between Sobolev and Lebesgue spaces.


\end{document}